\newcommand{\ubar}[1]{\underaccent{\bar}{#1}}
\DeclareMathOperator*{\argmin}{arg\,min}
\theoremstyle{plain}
\newtheorem{theorem}{Theorem}[section]
\newtheorem{proposition}[theorem]{Proposition}
\theoremstyle{definition}
\newtheorem{example}[theorem]{Example}
\theoremstyle{remark}
\newtheorem{remark}[theorem]{Remark}
\begin{document}

\title{Bidirectional Optimisation for Load Shaping within Coupled Microgrids}

\author{Philipp Sauerteig\thanks{Optimization-based Control group, Institute of Mathematics, Technische Universit\"at Ilmenau, Ilmenau, Germany, philipp.sauerteig@tu-ilmenau.de}}

\maketitle

\begin{abstract}%
	We address the problem of load shaping within a network of coupled microgrids (MGs) in a bilevel optimisation framework.\ %
	To this end, we consider the charging/discharging rates of residential energy storage devices within each MG on the lower level and the power exchange among neighbouring MGs on the upper level as optimisation variables.\ %
	We improve a previously developed model such that the maximal amount of exchanged power does not depend on the power demand, thus, increasing the flexibility within the network, and adapt the corresponding bidirectional optimisation scheme accordingly.\ %
	For efficiency, standard distributed optimisation routines are used for the optimisation on the lower level; the power exchange problem on the upper level is replaced by parallelisable small-scale quadratic programmings.\ %
	We prove global convergence of the optimisation scheme and illustrate the potential of the approach in a numerical case study based on real-world data.\ %
\end{abstract}

\section{Introduction}
Over the last decades more and more processes throughout the entire power grid have been automated.\ %
Behind these automations are complex optimisation algorithms that ensure proper operation of the so-called \emph{smart grid}~\cite{RuizColm14,Farh10,MoldBakk10}.\ %
Traditionally, power grids are hierarchically structured with power flowing from the top to the bottom layer.\ %
With the increasing number of renewable energy sources on a residential level and the resulting necessity for local storages, the distribution grid becomes more active and offers new optimisation potential~\cite{ZafaMahm18,GrijTari11}.\ %
Typically, a collection of such residential energy systems is called a \emph{microgrid} (MG).\ %
There are several possible objectives that need to be addressed when optimising MGs.\ %
One of the probably most important ones is peak shaving~\cite{OudaCher07,WangWang13} or, more generally, load shaping~\cite{PateErdi15}.\ %
Recent research articles suggest to use distributed optimisation algorithms for optimal control of MGs, see, e.g.~\cite{MaknQu14,WortKell15,LiuTan18,BrauFaul18} or the survey article~\cite{MolzDoer17}.\ %
An extension of the optimisation of a single MG is to consider interconnected MGs~\cite{SampHofm21,TianXiao16}.\ %
Instead of having one layer, one then has to consider a grid hierarchy and use tailored algorithms to handle the communication between the MGs and the superordinate organising unit.\ %
In~\cite{MurrEnge18}, for instance, the authors propose to use an augmented Lagrangian alternating direction inexact Newton (ALADIN) method to solve a hierarchical mixed-integer optimisation.\ %
For two-layer optimisation problems there exists bilevel optimisation methods, of which an overview is given in~\cite{ColsMarc07}.\ %
In~\cite{BahrMogh16} the authors use Karush-Kuhn-Tucker (KKT) conditions and dual theory to translate the problem into a linear single-level problem.\ %
Stochastical bilevel for the bidding strategy of a power plant is considered in~\cite{KardSimo16}.\ %
Multiobjective bilevel optimisation problems have been addressed, e.g.\ in~\cite{Eich10}.\ %
State of the art to incorporate prediction based on, e.g.\ weather forecasts, is model predictive control (MPC).\ %
In the context of MG optimisation see, e.g.~\cite{PariRiko14} for a mixed-integer linear problem or the review article~\cite{Scat09} for distributed hierarchical MPC.\ %

In~\cite{BrauSaue19}, the authors consider a network of partially coupled MGs, which are equipped with residential energy storage units (batteries), that can be coordinated in order to reduce peaks in the power demand.\ %
Additionally, the MGs are able to exchange power with their neighbours to improve the overall peak-shaving performance yielding a bilevel optimisation problem.\ %
For solving this problem, a bidirectional optimisation scheme was proposed in~\cite{BaumGrun19}.\ %
To this end, the exchanged power is taken into account as additional demand/supply in the lower level optimisation problem, thus, possibly changing the optimal battery control.\ %
The updated charging profile in turn comes along with a new aggregated power demand within each MG changing the parameters in the power exchange problem on the upper level and so on.\ %
In conclusion, the bilevel problem is solved iteratively in a negotiation-like process.\ %
The model for the power exchange, however, has some slight disadvantages.\ %
In particular, the maximal amount of power that can be exchanged is a fraction of the aggregated power demand within the respective MG.\ %
Thus, if the demand is balanced (approximately zero) no power can be exchanged.\ %
In this paper, we improve the model to circumvent this problem and adapt the proposed bidirectional optimisation scheme accordingly.\ %
The benefit of the new formulation is twofold:\ %
(1) from a practical point of view, it enables us to exploit more flexibility and, thus, reduce the overall costs and (2) from a theoretical point of view,\ %
the resulting optimisation problem is convex, which allows us to adapt the techniques used in~\cite{BrauGrue16a} to prove global convergence.\ %
Furthermore, we demonstrate the efficiency of the proposed setting by incorporating it within an MPC framework showing that it is suited to reduce the overall load shaping costs based on a novel real-world data set.\ %

The remainder of this paper is structured as follows.\ %
In Section~\ref{sec:model}, we discuss the underlying model and formulate the optimisation problem.\ %
In Section~\ref{sec:bidir_optim}, we adapt the bidirectional optimisation scheme proposed in~\cite{BaumGrun19} to the new problem formulation.\ %
Section~\ref{sec:numerics} is dedicated to a numerical cased study using real-world data before we conclude in Section~\ref{sec:conclusions}.\ %

Throughout this paper, we use the notation $[\ell:m]$ to describe the set of all integers $\{\ell, \ell+1, \ldots, m\}$ from~$\ell$ to~$m$ for any integers $\ell, m \in \mathbb{Z}$ with $\ell \leq m$.\ %

\section{Model and Problem Formulation}\label{sec:model}
We study two levels of the grid hierarchy:\ %
the lower level is a collection of residential energy systems forming a \emph{microgrid} (MG) while the upper level consists of \emph{coupled microgrids}.\ %
On the lower level, energy storage devices are controlled such that peaks in the aggregated power demand are reduced.\ %
To improve the peak shaving even further, power exchange among neighbouring MGs is optimised on the upper level.\ %

\subsection{Peak Shaving within a Single Microgrid}\label{sec:model:single}
The underlying model has been developed in~\cite{WortKell14} and extended, e.g.\ in~\cite{BrauFaul18,GrunSaue19}.\ %
We consider a network of $\mathcal{I}$, $\mathcal{I} \in \mathbb{N}$, residential energy systems.\ %
At time instant $n \in \mathbb{N}_0$, each system~$i$, $i \in [1:\mathcal{I}]$, comprises its load~$\ell_i(n)$ [kW] as well as some energy generation~$g_i(n)$ [kW] and storage device (battery) with a dynamically changing state of charge (SoC) $x_i(n)$ [kWh].\ %
Moreover, load and generation are combined to the net consumption $w_i = \ell_i - g_i$.\ %
By charging/discharging $u_i(n) = (u_i^+(n), u_i^-(n))^\top$ [kW] the battery, system~$i$ is able to control its power demand~$z_i(n)$ [kW] as described by the discrete-time system dynamics
\begin{subequations}\label{subeq:system_dynamics}
	\begin{align}
		x_i(n+1) \; & = \; \alpha_i x_i(n) + T(\beta_i u_i^+(n) + u_i^-(n)) \label{eq:dyn_soc} \\
		z_i(n) \; & = \; w_i(n) + u_i^+(n) + \gamma_i u_i^-(n). \label{eq:dyn_demand}
	\end{align}
\end{subequations}
Here, the parameters $\alpha_i, \beta_i, \gamma_i \in (0,1]$ describe efficiencies with respect to self discharge, charging, and discharging, respectively.\ %
The length of a time step is denoted by $T > 0$ [h].\ %
The battery dynamics are subject to the constraints
\begin{subequations}\label{subeq:battery_constraints}
	\begin{eqnarray}
		0 \; \leq \; & x_i(n) & \leq \; C_i, \\
		\ubar{u}_i \; \leq \; & u_i^-(n) & \leq \; 0, \\
		0 \; \leq \; & u_i^+(n) & \leq \; \bar{u}_i, \\
		0 \; \leq \; & \frac{u_i^-(n)}{\ubar{u}_i} + \frac{u_i^+(n)}{\bar{u}_i} \; & \leq \; 1, \label{eq:constraint:charge_and_discharge}
	\end{eqnarray}
\end{subequations}
where $C_i \geq 0$ [kWh] denotes the battery capacity and $\ubar{u}_i \leq 0$ [kW] and $\bar{u}_i \geq 0$ [kW] represent maximal discharging and charging rates, respectively.\ %
Here, constraint~\eqref{eq:constraint:charge_and_discharge} ensures that those bounds are also satisfied if batteries are both charged and discharged during one time step.\ %
In particular for large time steps~$T$, it might be optimal to charge and discharge within one step in order to dissipate superfluous energy due to conversion losses $1 - \beta_i$ and $1 - \gamma_i$.\ %

Each system is connected to the microgrid operator (MGO) as depicted in Figure~\ref{fig:micorgrid}.\ %
\begin{figure}[h]
	\centering
	\includegraphics[width=\columnwidth]{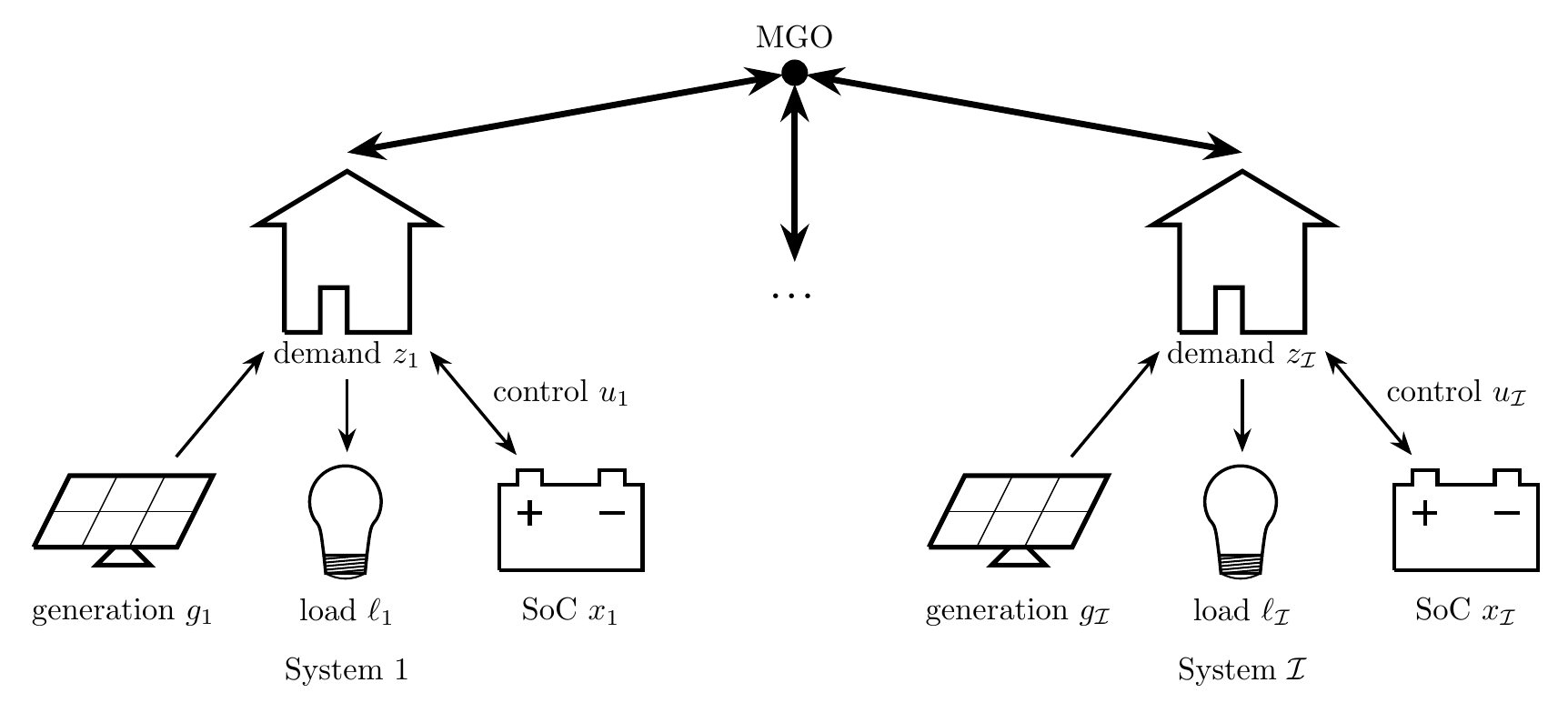}
	\caption{One microgrid consisting of~$\mathcal{I}$ residential energy systems each incorporating load, generation, and storage.\ %
		Each system is connected to the microgrid operator (MGO).\ %
		Arrows indicate the possible direction of power flow.}
	\label{fig:micorgrid}
\end{figure}
Typically, the aggregated power demand profile $\bar{z} = \sum_{i=1}^\mathcal{I} z_i$, which has to be compensated by the MGO is volatile, e.g.\ due to time-dependent consumption and weather-dependent generation.\ %
Consequently, one goal from a MGO's point of view is peak shaving or, more generally, load shaping, i.e., the MGO is interested in a \emph{nice} (e.g.\ constant) demand profile~$\zeta$ [kW].\ %
This can be achieved by (dis-)charging the residential batteries such that 
\begin{align}
	\sum_{n = 0}^\infty \left( \bar{z}(n) - \zeta(n) \right)^2 \notag
\end{align}
is minimised.\ %
Keep in mind, that the future demand~$z_i(n)$, depends on the future net consumption~$w_i(n)$, see~\eqref{eq:dyn_demand}, which is unknown at the current time instant $k \in \mathbb{N}_0$.\ %
However, we assume the future net consumption $w_i = (w_i(k), \ldots, w_i(k+N-1))^\top$ to be predictable on a sufficiently small time window of~$N$, $N \in \mathbb{N}_{\geq 2}$, time steps.\ %
From here on, we use the notation $z_i = (z_i(k), \ldots, z_i(k+N-1))^\top \in \mathbb{R}^N$ to denote the future power demand of system~$i$ over the prediction horizon~$N$, similar for other variables.\ %
In conclusion, at time instant~$k$, we aim to minimise
\begin{align}
	\sum_{n = k}^{k+N-1} \left( \bar{z}(n) - \zeta(n) \right)^2 \notag
\end{align}
subject to the system dynamics and constraints.\ %

Next we introduce some notions we will make use of in the subsequent sections.\ %
Note that the battery dynamics~\eqref{eq:dyn_soc} as well as the constraints~\eqref{subeq:battery_constraints} are linear.\ %
Once the current SoC~$\hat x_i$ has been measured, both state and control constraints can be written in the form
\begin{align}
	D_i u_i \; \leq \; d_i \quad \forall \, i \in [1:\mathcal{I}] \quad \text{or} \quad D u \; \leq \; d \notag
\end{align}
with suitable matrices $D_i$, $D$ and vectors $d_i$, $d$.\ %
Here, we stacked the controls 
\begin{align}
	u_i = (u_i(k)^\top, \ldots, u_i(k+N-1)^\top)^\top \in \mathbb{R}^{2N} \notag
\end{align}
and $u = (u_1^\top, \ldots, u_\mathcal{I}^\top)^\top \in \mathbb{R}^{2N \mathcal{I}}$.\ %
Furthermore, the demand equation~\eqref{eq:dyn_demand} can be written as
\begin{align}
	z_i \; = \; A_i u_i + b_i \quad \forall \, i \in [1:\mathcal{I}] \quad \text{or} \quad \bar{z} \; = \; A u + b \notag
\end{align}
with suitable matrices~$A_i$, $A$ and vectors~$b_i$, $b$, see also~\cite{JianSaue21}.\ %
We collect all feasible control sequences in
\begin{align}
	\mathbb{U}_i \; = \; \Set{u_i \in \mathbb{R}^{2N} | D_i u_i \leq d_i} \notag
\end{align}
and feasible demand profiles in
\begin{align}
	\mathbb{D}_i \; = \; \Set{z_i \in \mathbb{R}^N | \exists \, u_i \in \mathbb{U}_i : z_i = A_i u_i + b_i}. \notag
\end{align}

Given the predicted net consumption profiles~$w_i$ and the measured current SoCs~$\hat x_i$, the MGO is interested in solving the quadratic programming (QP)
\begin{subequations}\label{subeq:OP_lower}
	\begin{align}
		\min \quad & \left\| A u + b - \zeta \right\|_2^2 \\
		\mathrm{s.t.} \quad & D u \leq d.
	\end{align}
\end{subequations}
Solving such problems efficiently in a distributed way has been analysed, e.g.\ in~\cite{WortKell14,BrauFaul18,JianSaue21}.\ %

\subsection{Power Exchange among Coupled Microgrids}
In the previous subsection, we discussed how a MGO may achieve load shaping by manipulating residential batteries.\ %
Naturally, there may be situations (high load/generation), where the batteries do not suffice to achieve this goal.\ %
In this section, we consider a network of (partially) coupled MGs and show how power exchange among neighbouring MGs can be used to improve the overall performance.\ %

We consider~$M$, $M \in \mathbb{N}$, MGs each structured as described in Section~\ref{sec:model:single}.\ %
In particular, MG~$\kappa$ comprises the aggregated power demand~$\bar z_\kappa$, $\kappa \in [1:M]$.\ %
We add a subscript~$\kappa$ whenever necessary to distinguish among MGs.\ %
Among some of these MGs there are transmission lines along which power can be exchanged.\ %
Let $\lambda_{\kappa \nu} \geq 0$ be the line limit, i.e.\ the maximal amount of power that can be transmitted between MG~$\kappa$ and MG~$\nu$, $\kappa, \nu \in [1:M]$.\ %
We assume $\lambda \in \mathbb{R}^{M \times M}$ to be symmetric and $\lambda_{\kappa \nu} > 0$ if and only if there is a transmission line between MG~$\kappa$ and MG~$\nu$ and collect the set of transmission lines (edges of a graph) in
\begin{align}
	E \; = \; \Set{(\kappa, \nu) \in [1:M]^2 | \lambda_{\kappa \nu} > 0, \; \kappa < \nu}. \notag
\end{align}
Then, by $\lambda_{\kappa \nu} \delta_{\kappa \nu}(n)$ with $\delta_{\kappa \nu}(n) \in [0,1]$ we denote the power that is actually sent from MG~$\kappa$ to MG~$\nu$ at time instant~$n$, $(\kappa, \nu) \in E$.\ %
However, each line comprises some efficiency $\eta_{\kappa \nu} \in (0,1]$.\ %
Therefore, the power received by MG~$\nu$ from MG~$\kappa$ is given by $\lambda_{\kappa \nu} \eta_{\kappa \nu} \delta_{\kappa \nu}(n)$.\ %
Line losses are assumed to not depend on the direction of the power exchange.\ %
Hence, $\eta \in \mathbb{R}^{M \times M}$ is symmetric as well.\ %

Using the notation $\delta_{\kappa \nu} = (\delta_{\kappa \nu}(k), \ldots, \delta_{\kappa \nu}(k+N-1))^\top \in \mathbb{R}^N$, $(\kappa, \nu) \in E$, and $\bar{\mathbf{z}} = (\bar z_1^\top, \ldots, \bar z_M^\top)^\top \in \mathbb{R}^{MN}$, we formulate the overall objective function $J : \mathbb{R}^{M N} \times \mathbb{R}^{2 |E| N} \to \mathbb{R}$ as
\begin{align}
	& J(\bar{\mathbf{z}}, \delta)  = \sum_{\kappa=1}^M \left\| \zeta_\kappa  - \left[ \bar{z}_\kappa + \sum_{\nu \in \mathcal{N}(\kappa)} \lambda_{\kappa \nu} ( \delta_{\kappa \nu} - \eta_{\nu \kappa} \delta_{\nu \kappa}) \right] \right\|_2^2. \notag
\end{align}
The notion $\mathcal{N}(\kappa)$ denotes the neighbouring MGs of MG~$\kappa$, i.e.,
\begin{align}
	\mathcal{N}(\kappa) \; = \; \Set{\nu \in [1:M] | \lambda_{\kappa \nu} > 0}. \notag
\end{align}
We allow power exchange in both directions along one transmission line during one time step.\ %
In order to ensure that the line limits are not violated we introduce the constraint
\begin{align}
	\delta_{\kappa \nu}(n) + \delta_{\nu \kappa}(n) \; \leq \; 1 \notag
\end{align}
for all $(\kappa, \nu) \in E$ and $n \in \mathbb{N}_0$, similar to~\eqref{eq:constraint:charge_and_discharge}.\ %
Thus, given the aggregated power demand profiles~$\bar z_\kappa$ of all MGs $\kappa \in [1:M]$, the power exchange problem reads as
\begin{subequations}\label{subeq:OP_upper}
	\begin{align}
		\min_{\delta \in \mathbb{R}^{2 |E| N}} \quad & J(\bar{\mathbf{z}}, \delta) \\
		\mathrm{s.t.} \quad & \delta_{\kappa \nu}(n) \geq 0 
		\label{eq:delta_nonnegative} \\
		& \delta_{\kappa \nu}(n) + \delta_{\nu \kappa}(n) - 1 \leq 0 \label{eq:limit_exchange_both_directions} \\
		& \quad \forall \, (\kappa, \nu) \in E, \; n \in [k:k+N-1]. \notag 
	\end{align}
\end{subequations}
We collect all feasible exchange strategies in
\begin{align}
	\mathbb{D}^\delta \; := \; \Set{ \delta \in \mathbb{R}^{2 \left| E \right| N} | \eqref{eq:delta_nonnegative} \text{ and } \eqref{eq:limit_exchange_both_directions} \text{ hold}} \notag
\end{align}
and summarise the overall optimisation problem as
\begin{align}
	\min_{(\bar{\mathbf{z}}, \delta) \in \bar{\mathbb{D}} \times \mathbb{D}^\delta} \quad J(\bar{\mathbf{z}}, \delta). \label{eq:OP_full}
\end{align}

The exchange structure of a network consisting of four MGs is depicted in Figure~\ref{fig:exchange_structure}.\ %
\begin{figure}[h]
	\centering
	\includegraphics[width=\columnwidth]{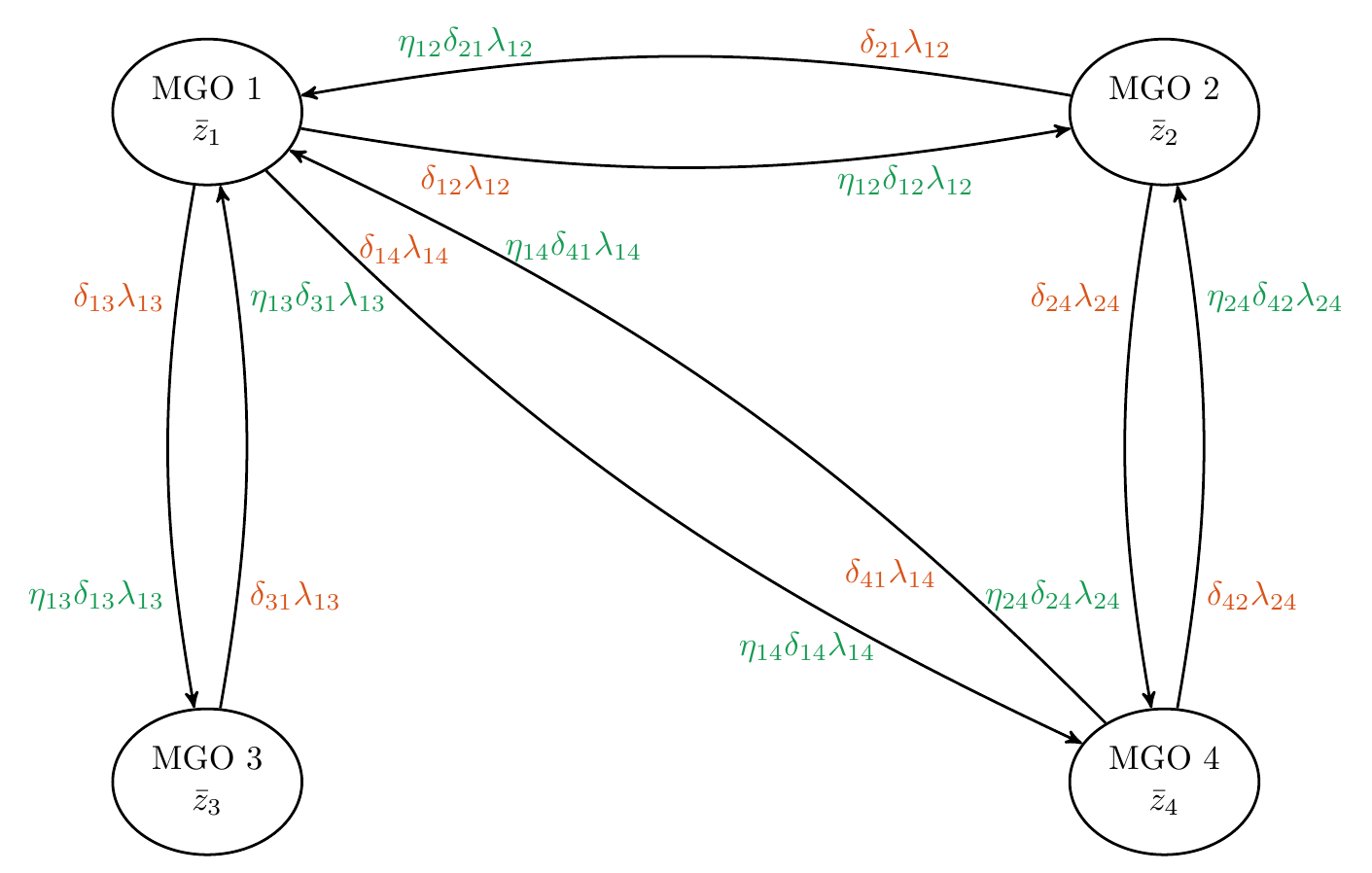}
	\caption{Power exchange structure of four partially coupled MGs.\ %
		Each MGO~$\kappa$ has his aggregated demand~$\bar{z}_\kappa$ at his disposal and may send $\delta_{\kappa \nu} \lambda_{\kappa \nu}$ units of power to MGO~$\nu$ (red).\ %
		Due to line losses, only $\eta_{\kappa \nu} \delta_{\kappa \nu} \lambda_{\kappa \nu}$ units arrive at MGO~$\nu$ (green).}
	\label{fig:exchange_structure}
\end{figure}
Note that we do not restrict the amount of power that MG~$\kappa$ is able to distribute in terms of its own demand.\ %
In particular, if 
\begin{align}
	\sum_{\nu \in \mathcal{N}(\kappa)} \lambda_{\kappa \nu} \delta_{\kappa \nu}(n) \; < \; - \bar{z}_\kappa(n), \notag
\end{align}
i.e., if the amount of power to be sent from MG~$\kappa$ to its neighbours is greater than the amount of power available in MG~$\kappa$, the DSO is assumed to compensate the difference.\ %

\begin{example}[Exchange formulation in~\cite{BaumGrun19} fails]\label{ex:old_formulation_fails}
	Consider $M = 4$ MGs without batteries, i.e., $C_{\kappa, i} = 0$ for all $i \in [1:\mathcal{I}_\kappa]$, $\kappa \in [1:M]$, and coupled as depicted in Figure~\ref{surr:fig:chained_mgs}.\ %
	For simplicity, let $\eta_{12} = \eta_{23} = \eta_{34} = 1$ and $N = 1$ as well as $\mathcal{I}_\kappa = 1$ and $\zeta_\kappa = 0$ for all $\kappa \in [1:M]$.\ %
	Let the aggregated power demands be given by $- \bar{z}_1 = \bar{z}_4 = 10$ and $\bar{z}_2 = \bar{z}_3 = 0$.\ %
	\begin{figure}[h!]
		\centering
		\includegraphics[width=\columnwidth]{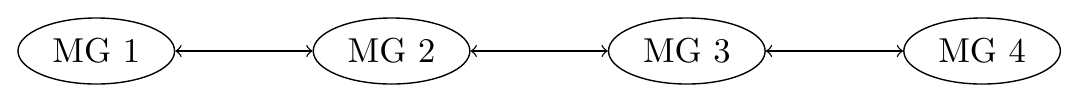}
		\caption[Four chain-like coupled MGs]%
		{Four chain-like coupled MGs.}
		\label{surr:fig:chained_mgs}
	\end{figure}
	
	In the absence of batteries we are only interested in optimising the power exchange.\ %
	Furthermore, since there are no line losses, the optimal power exchange should look like 
	\begin{itemize}
		\item MG~1 sends -10kW \emph{of its demand} to MG~2 (i.e., MG~1 sends 10kW to MG~2),
		\item MG~2 sends -10kW of its demand to MG~3, and
		\item MG~3 sends -10kW of its demand to MG~4,
	\end{itemize}
	which results in $\bar z_1 = \bar z_2 = \bar z_3 = \bar z_4 = 0$ and, thus, an optimal function value of $J^\star = 4 \cdot 0^2 = 0$.\ %
	However, in~\cite{BaumGrun19} the power exchange from MG~$\kappa$ to MG~$\nu$ is formulated as a fraction $\tilde \delta_{\nu \kappa} \bar{z}_\nu$ of the power demand $\bar{z}_\nu$ with $\tilde \delta_{\nu \kappa} \in [0,1]$.\ %
	Since $\bar{z}_2 = \bar{z}_3 = 0$, MGs~2 and~3 cannot exchange power with each other.\ %
	Hence, the optimal solution is given by 
	\begin{itemize}
		\item MG~1 sends -5kW of its demand to MG~2 and
		\item MG~4 sends 5kW of its demand to MG~3,
	\end{itemize}
	which results in $\bar z_1 = \bar z_2 = -5 $ and $\bar z_3 = \bar z_4 = 5$ and, thus, $J^\star = 4 \cdot 5^2 = 100$.\ %
	In conclusion, the formulation presented in~\cite{BaumGrun19} is not capable to exploit the full potential of the power exchange.\ %
\end{example}
In~\cite{BrauSaue19,BaumGrun19} the authors also considered power exchange among coupled MGs, however, they restricted the amount of exchanged power to a fraction of the power demand.\ %
Moreover, the authors did not allow for power exchange along both directions of a transmission line within one time step, which was encoded via complementarity constraints, which in turn made the feasible set non-convex.\ %
Our global convergence proof presented in Subsection~\ref{sec:global_conv_proof} exploits convexity of both the objective function and the feasible set and is, therefore, not applicable to the formulation in~\cite{BaumGrun19}.\ %

\section{Bidirectional Optimisation}\label{sec:bidir_optim}
Instead of solving~\eqref{eq:OP_full} centralised at once, we propose an iterative bidirectional optimisation scheme to reduce the required overhead communication.\ %
First, each MGO optimises his local power demand using the storage devices.\ %
Then, a negotiation-like process between MGOs and DSO takes place.\ %
In each step, it is determined whether power exchange may improve the overall performance.\ %
Then, based on the exchanged power, each MGO post-optimises the control of the local batteries.\ %
This procedure monotonically reduces the overall costs and ensures feasibility in each step.\ %

\subsection{Optimisation Scheme}
We propose an iterative procedure to improve the overall performance by solving~\eqref{subeq:OP_lower} and~\eqref{subeq:OP_upper} repeatedly as depicted in Figure~\ref{fig:bidir_opt_scheme}.\ %
\begin{figure}[h]
	\centering
	\includegraphics[width=\columnwidth]{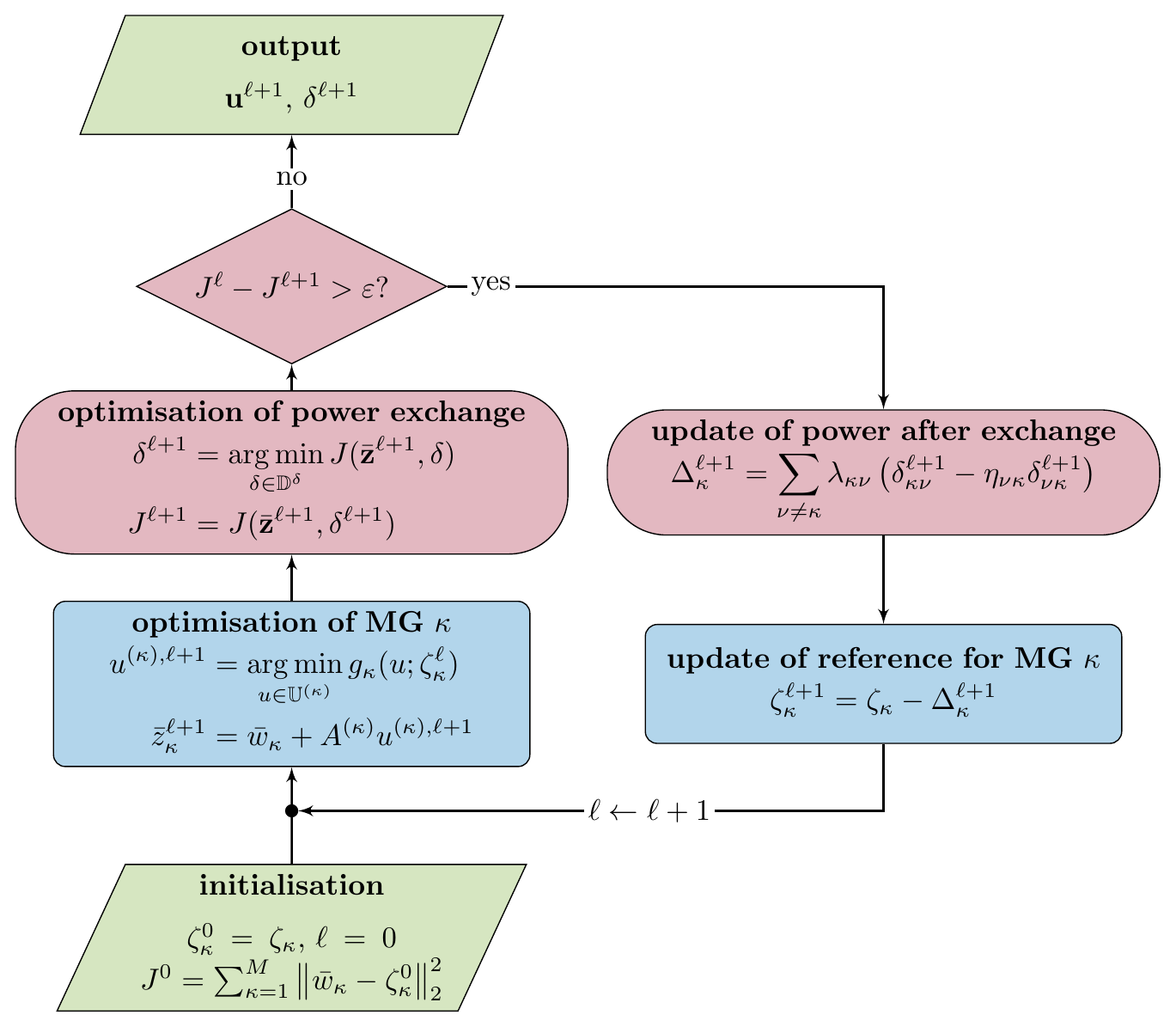}
	\caption{Iterative bidirectional optimisation scheme.\ %
		The green rhomboids mark input and output of the algorithm, while the blue rectangles and red rounded boxes indicate whether the step is performed by each MG in parallel or centrally by the DSO, respectively.}
	\label{fig:bidir_opt_scheme}
\end{figure}
To this end, assume at iteration~$\ell$, $\ell \in \mathbb{N}_0$, each MGO~$\kappa$ has optimised his aggregated power demand~$\bar{z}_\kappa^\ell$ locally by manipulating residential batteries, i.e., OP~\eqref{subeq:OP_lower} has been solved per MG.\ %
Then, OP~\eqref{subeq:OP_upper} is solved to determine the optimal power exchange strategy~$\delta^\ell$.\ %
The idea is to incorporate the exchanged power into the local optimisation problems in order to adjust the battery control.\ %
Let
\begin{align}
	\Delta_\kappa^\ell \; := \; \sum_{\nu \in \mathcal{N}(\kappa)} \lambda_{\kappa \nu} (\delta_{\kappa \nu}^\ell - \eta_{\nu \kappa} \delta_{\nu \kappa}^\ell) \in \mathbb{R}^N \notag
\end{align}
denote the net power that MGO~$\kappa$ receives/provides based on the exchange strategy~$\delta^\ell$.\ %
Then, the overall objective function~$J$ can be written as
\begin{align}
	J(\bar{\mathbf{z}}^\ell, \delta^\ell) \; 
	& = \; \sum_{\kappa=1}^M \left\| \zeta_\kappa - \left[ \bar{z}_\kappa^\ell + \Delta_\kappa^\ell \right] \right\|_2^2 \notag \\
	& = \; \sum_{\kappa=1}^M \left\| A^{(\kappa)} u^{(\kappa)} + b^{(\kappa)} - \zeta_\kappa^\ell \right\|_2^2
	\label{eq:overall_vs_local_objective}
\end{align}
with $\zeta_\kappa^\ell := \zeta_\kappa - \Delta_\kappa^\ell$, $\kappa \in [1:M]$.\ %
Here, we used the notation as in~\eqref{subeq:OP_lower} with the additional superscript~$(\kappa)$ to indicate the dependence on MG~$\kappa$.\ %
Minimising~\eqref{eq:overall_vs_local_objective} with respect to the battery usage can again be parallelised with respect to~$u^{(\kappa)}$.\ %
To this end, we introduce the local objective functions $g_\kappa : \mathbb{R}^{2 N \mathcal{I}_\kappa} \times \mathbb{R}^N \to \mathbb{R}$, 
\begin{align}
	g_\kappa(u^{(\kappa)}; \zeta_\kappa) \; := \; \left\| A^{(\kappa)} u^{(\kappa)} + b^{(\kappa)} - \zeta_\kappa \right\|_2^2, \notag
\end{align}
for all $\kappa \in [1:M]$.\ %
In a next iteration, the local optimisation problems 
\begin{align}
	u^{(\kappa),\ell+1} \; = \; \argmin_{u \in \mathbb{U}^{(\kappa)}} g_\kappa(u; \zeta_\kappa^\ell) \label{eq:OP_lower_update}
\end{align}
are solved in parallel and the procedure is repeated.\ %
The bidirectional optimisation scheme is summarised in Algorithm~\ref{alg:bidir_opt_scheme}.\ %
\begin{algorithm}
	\caption{Iterative bidirectional optimisation scheme}
	{\bf Input}: Current time instance $k \in \mathbb{N}_0$,
	current SoC $x_{\kappa_i}(k) \in \mathbb{X}_{\kappa_i}$, 
	prediction horizon $N \in \mathbb{N}_{\geq 2}$, 
	predicted net consumption $(w_{\kappa, i}(k), \ldots, w_{\kappa, i}(k+N-1))^\top \in \mathbb{R}^N$, $i \in [1:\mathcal{I}_\kappa]$, $\kappa \in [1:M]$, 
	reference trajectories $\zeta_\kappa = (\zeta_\kappa(k), \ldots, \zeta_\kappa(k+N-1))^\top \in \mathbb{R}^N$, $\kappa \in [1:M]$, 
	maximal number $\ell_{\max} \in \mathbb{N}$ of iterations, and 
	tolerance $\varepsilon > 0$. \\
	{\bf Initialisation}:
	\begin{enumerate}
		\item Set $\bar{w}_\kappa = \sum_{i=1}^{\mathcal{I}_\kappa} w_{\kappa, i}$, $\zeta_\kappa^0 = \zeta_\kappa$ for all $\kappa \in [1:M]$, and $\ell = 0$.\ %
		Compute $J^0 = \sum_{\kappa=1}^M \left\| \bar{w}_\kappa - \zeta_\kappa^0 \right\|_2^2$.\ %
		\item \emph{Lower level.}\ %
		Solve $u^{(\kappa),1} = \argmin_{u \in \mathbb{U}^{(\kappa)}} g_\kappa(u; \zeta_\kappa^0)$, compute $\bar{z}_\kappa^1 = \bar{w}_\kappa + A^{(\kappa)} u^{(\kappa),1}$, and send it to the upper level.\ %
		\item \emph{Upper level.}\ %
		Given $\bar{z}_\kappa^1$ for all $\kappa \in [1:M]$, solve~\eqref{subeq:OP_upper} for~$\delta^1$.\ %
		\item Evaluate $J^1 = J(\bar{\mathbf{z}}^1, \delta^1)$.\ %
	\end{enumerate}
	{\bf While} $\ell < \ell_{\max}$ and $J^{\ell} - J^{\ell+1} > \varepsilon$\\ %
	{\bf Do:}
	\begin{enumerate}[resume]
		\item \emph{Upper level.}\ %
		Compute 
		$$\Delta_\kappa^{\ell+1} = \sum_{\nu \neq \kappa} \lambda_{\kappa \nu} (\delta_{\kappa \nu}^{\ell+1} - \eta_{\nu \kappa} \delta_{\nu \kappa}^{\ell+1})$$
		and send it to MG~$\kappa$, $\kappa \in [1:M]$.\ %
		\item \emph{Lower level.}\ %
		Update $\zeta_\kappa^{\ell+1} = \zeta_\kappa - \Delta_\kappa^{\ell+1}$.\ %
		\item Increment $\ell \leftarrow \ell+1$.\ %
		\item \emph{Lower level.}\ %
		Solve $u^{(\kappa),\ell+1} = \argmin_{u \in \mathbb{U}^{(\kappa)}} g_\kappa(u; \zeta_\kappa^\ell)$, compute $\bar{z}_\kappa^{\ell+1} = \bar{w}_\kappa + A^{(\kappa)} u^{(\kappa),\ell+1}$, and send it to the upper level. \ %
		\item \emph{Upper level.}\ %
		Given $\bar{z}_\kappa^{\ell+1}$ for all $\kappa \in [1:M]$, solve~\eqref{subeq:OP_upper} for $\delta^{\ell + 1}$.\ %
		\item \emph{Upper level.}\ %
		Evaluate $J^{\ell+1} = J(\bar{\mathbf{z}}^{\ell+1}, \delta^{\ell+1})$.\ %
	\end{enumerate}
	\label{alg:bidir_opt_scheme}
\end{algorithm}

\subsection{Global Convergence Proof}\label{sec:global_conv_proof}
In this subsection we show that for $\ell \to \infty$ the sequence $(J^\ell)_{\ell \in \mathbb{N}_0}$ generated by Algorithm~\ref{alg:bidir_opt_scheme} converges to the optimal value of~\eqref{eq:OP_full}.\ %
The approach mimics the line of arguments proposed in~\cite[Sec.~IV.B]{BrauGrue16a}.\ %
The key contribution is to make the bi-directional problem accessible to those techniques.\ %

First, we show that Algorithm~\ref{alg:bidir_opt_scheme} successively reduces the overall costs.\ %
\begin{proposition}\label{prop:bidir_mono_decreasing}
	The sequence $(J^\ell)_{\ell \in \mathbb{N}}$ generated by Algorithm~\ref{alg:bidir_opt_scheme} is non-increasing and the corresponding tuple $(\mathbf{z}^\ell, \delta^\ell)$ is feasible for~\eqref{eq:OP_full} for all $\ell \in \mathbb{N}$.\ %
	Furthermore, if $\bar{\mathbf{z}}^{\ell+1} \neq \bar{\mathbf{z}}^\ell$ for some iteration~$\ell$, then $J^{\ell+1} < J^\ell$.\ %
\end{proposition}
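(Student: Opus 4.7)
The plan is to trace the effect of each of the two alternating minimisations on the Lyapunov-like quantity $J^\ell = J(\bar{\mathbf{z}}^\ell, \delta^\ell)$, exploiting the decomposition~\eqref{eq:overall_vs_local_objective}. Feasibility is handled first: by Steps~2 and~8 of Algorithm~\ref{alg:bidir_opt_scheme}, each $u^{(\kappa),\ell}$ is produced as an $\argmin$ over $\mathbb{U}^{(\kappa)}$, so $\bar{z}_\kappa^\ell = \bar{w}_\kappa + A^{(\kappa)} u^{(\kappa),\ell} \in \bar{\mathbb{D}}_\kappa$, and $\delta^\ell$ solves~\eqref{subeq:OP_upper}, so $\delta^\ell \in \mathbb{D}^\delta$. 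Hence $(\bar{\mathbf{z}}^\ell, \delta^\ell) \in \bar{\mathbb{D}} \times \mathbb{D}^\delta$ for every $\ell \in \mathbb{N}$.

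For monotonicity I would first rewrite $J^\ell$ using~\eqref{eq:overall_vs_local_objective} as $J^\ell = \sum_{\kappa=1}^M g_\kappa(u^{(\kappa),\ell}; \zeta_\kappa^\ell)$, where $\zeta_\kappa^\ell = \zeta_\kappa - \Delta_\kappa^\ell$ is the reference constructed from $\delta^\ell$ in Steps~5--6 prior to the increment in Step~7. The lower-level update~\eqref{eq:OP_lower_update} in Step~8 then sets $u^{(\kappa),\ell+1}$ to the minimiser of $g_\kappa(\cdot;\zeta_\kappa^\ell)$ over $\mathbb{U}^{(\kappa)}$. Since $u^{(\kappa),\ell} \in \mathbb{U}^{(\kappa)}$ is a feasible comparator, $g_\kappa(u^{(\kappa),\ell+1};\zeta_\kappa^\ell) \leq g_\kappa(u^{(\kappa),\ell};\zeta_\kappa^\ell)$; summing and re-assembling via~\eqref{eq:overall_vs_local_objective} gives $J(\bar{\mathbf{z}}^{\ell+1}, \delta^\ell) \leq J^\ell$. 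The subsequent upper-level solve in Step~9 produces $\delta^{\ell+1} \in \argmin_\delta J(\bar{\mathbf{z}}^{\ell+1},\delta)$, and since $\delta^\ell \in \mathbb{D}^\delta$ is feasible, $J^{\ell+1} = J(\bar{\mathbf{z}}^{\ell+1},\delta^{\ell+1}) \leq J(\bar{\mathbf{z}}^{\ell+1}, \delta^\ell) \leq J^\ell$.

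For the strict-decrease clause, note that although $g_\kappa(u;\zeta)$ need not be strictly convex in $u$ (the matrix $A^{(\kappa)}$ may have a nontrivial kernel), it is a positive-definite quadratic in $\bar{z}_\kappa = \bar{w}_\kappa + A^{(\kappa)} u$. Hence the minimiser in $\bar{z}_\kappa$-coordinates over $\bar{\mathbb{D}}_\kappa$ is unique, and it coincides with $\bar{z}_\kappa^{\ell+1}$. If $\bar{\mathbf{z}}^{\ell+1} \neq \bar{\mathbf{z}}^\ell$, then some component satisfies $\bar{z}_\kappa^{\ell+1} \neq \bar{z}_\kappa^\ell$, so $u^{(\kappa),\ell}$ is suboptimal for $g_\kappa(\cdot;\zeta_\kappa^\ell)$ and the corresponding summand satisfies $g_\kappa(u^{(\kappa),\ell+1};\zeta_\kappa^\ell) < g_\kappa(u^{(\kappa),\ell};\zeta_\kappa^\ell)$. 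The lower-level inequality is then strict, and so is $J^{\ell+1} \leq J(\bar{\mathbf{z}}^{\ell+1}, \delta^\ell) < J^\ell$.

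I expect the main difficulty to be purely bookkeeping: one must keep straight that in Step~8 the reference $\zeta_\kappa^\ell$ used for the lower-level update coincides with the one appearing in the decomposition of $J^\ell$ — that is, the $\Delta_\kappa^\ell$ produced from $\delta^\ell$ must be the same object on both sides of the comparison. Once this is pinned down, the remaining argument is the standard two-block alternating-minimisation descent, combined with the strict convexity of the quadratic in $\bar{z}_\kappa$ to upgrade the weak inequality to a strict one whenever the aggregated demand actually moves.
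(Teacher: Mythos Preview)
Your proposal is correct and follows essentially the same two-block alternating-minimisation argument as the paper: decompose $J^\ell$ via~\eqref{eq:overall_vs_local_objective}, use the lower-level optimality to get $J(\bar{\mathbf{z}}^{\ell+1},\delta^\ell)\le J^\ell$, then the upper-level optimality to get $J^{\ell+1}\le J(\bar{\mathbf{z}}^{\ell+1},\delta^\ell)$, and finally invoke strict convexity of $\bar z\mapsto\|\bar z-\zeta_\kappa^\ell\|_2^2$ over the convex set $\bar{\mathbb D}^{(\kappa)}$ for the strict-decrease clause. Your explicit remark that strict convexity holds in the $\bar z_\kappa$-variable rather than in $u$ (because $A^{(\kappa)}$ may have a kernel) is a welcome clarification that the paper leaves implicit.
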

\begin{proof}
	Let $(\mathbf{u}^\ell, \bar{\mathbf{z}}^\ell, \delta^\ell, \zeta^{\ell})$ denote the $\ell$-th iterate of Algorithm~\ref{alg:bidir_opt_scheme} after the terminal condition has been checked.\ %
	Note that~$\mathbf{z}^\ell$ and~$\delta^\ell$ are feasible by construction.\ %
	Next, the local optimisation problems~\eqref{eq:OP_lower_update} are solved in parallel.\ %
	Therefore, 
	\begin{align}
		J^\ell 
		\; = \; J(\bar{\mathbf{z}}^\ell, \delta^\ell) 
		\; & = \; \sum_{\kappa=1}^M g_\kappa(u^{(\kappa),\ell}; \zeta_\kappa^\ell) \notag \\
		\; & \geq \; \sum_{\kappa=1}^M g_\kappa(u^{(\kappa), \ell+1}; \zeta_\kappa^\ell)
		\; = \; J(\bar{\mathbf{z}}^{\ell+1}, \delta^\ell). \label{eq:Jell+1<=Jell}
	\end{align}
	Further minimisation of~$J$ with respect to~$\delta$ yields
	\begin{align}
		J(\bar{\mathbf{z}}^{\ell+1}, \delta^\ell) \; \geq \; J(\bar{\mathbf{z}}^{\ell+1}, \delta^{\ell+1}) \; = \; J^{\ell+1} \notag
	\end{align}
	and, hence, $J^\ell \geq J^{\ell+1}$.\ %
	
	Assume $\bar{\mathbf{z}}^{\ell+1} \neq \bar{\mathbf{z}}^\ell$ for some iteration~$\ell$.\ %
	For each~$\kappa$ the set~$\bar{\mathbb{D}}^{(\kappa)}$ is convex and the map $\tilde{g}_\kappa : \mathbb{R}^N \to \mathbb{R}$, $\bar{z} \mapsto \left\| \bar{z} - \zeta_\kappa^\ell \right\|_2^2$, is strictly convex.\ %
	Hence, the minimiser $\bar{z}_\kappa^{\ell+1} = \argmin_{z \in \bar{\mathbb{D}}^{(\kappa)}} \tilde{g}_\kappa(z)$ is unique for all~$\kappa$.\ %
	Therefore, strict inequality holds in~\eqref{eq:Jell+1<=Jell}, which completes the proof.\ %
\end{proof}
The next theorem states that the infimum of the sequence generated by Algorithm~\ref{alg:bidir_opt_scheme} coincides with the optimal value of~\eqref{eq:OP_full}.\ %
In conclusion, Algorithm~\ref{alg:bidir_opt_scheme} converges to a global optimum of~\eqref{eq:OP_full}.\ %
\begin{theorem}
	The infimum~$J^\infty$ of the sequence~$(J^\ell)_{\ell \in \mathbb{N}_0}$ generated by Algorithm~\ref{alg:bidir_opt_scheme} is the optimal value~$J^\star$ of~\eqref{eq:OP_full}.\ %
\end{theorem}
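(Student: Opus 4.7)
The plan is to combine monotone convergence of $(J^\ell)$ with compactness of the feasible set and convexity of the problem, in the style of a block-coordinate descent argument on a convex problem.

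First I would note that, by Proposition~\ref{prop:bidir_mono_decreasing}, the sequence $(J^\ell)$ is non-increasing, and it is trivially bounded below by~$0$, so the infimum $J^\infty$ is attained as a limit. Moreover, the iterates $(\bar{\mathbf{z}}^\ell, \delta^\ell)$ live in $\bar{\mathbb{D}} \times \mathbb{D}^\delta$, which is compact: $\mathbb{D}^\delta$ is a bounded polytope, and each $\bar{\mathbb{D}}^{(\kappa)}$ is the image of the compact polytope $\mathbb{U}^{(\kappa)}$ under the affine map $u \mapsto A^{(\kappa)}u + b^{(\kappa)}$. Along a suitable subsequence $\ell_j$, I would pass to a limit $(\bar{\mathbf{z}}^{\ell_j}, \delta^{\ell_j}) \to (\bar{\mathbf{z}}^\infty, \delta^\infty)$, and by continuity of $J$ obtain $J(\bar{\mathbf{z}}^\infty, \delta^\infty) = J^\infty$. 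Feasibility of the limit is preserved since $\bar{\mathbb{D}} \times \mathbb{D}^\delta$ is closed.

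The crux is to show that $(\bar{\mathbf{z}}^\infty, \delta^\infty)$ solves the two one-block problems at the limit, i.e.\ $\bar{\mathbf{z}}^\infty \in \argmin_{\bar{\mathbf{z}} \in \bar{\mathbb{D}}} J(\bar{\mathbf{z}}, \delta^\infty)$ and $\delta^\infty \in \argmin_{\delta \in \mathbb{D}^\delta} J(\bar{\mathbf{z}}^\infty, \delta)$. For the first block, the sandwich
\[
	J^{\ell_j} = J(\bar{\mathbf{z}}^{\ell_j}, \delta^{\ell_j}) \;\geq\; J(\bar{\mathbf{z}}^{\ell_j+1}, \delta^{\ell_j}) \;\geq\; J^{\ell_j+1}
\]
together with $J^{\ell_j}, J^{\ell_j+1} \to J^\infty$ forces $J(\bar{\mathbf{z}}^{\ell_j+1}, \delta^{\ell_j}) \to J^\infty$. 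Passing to a further subsequence so that $\bar{\mathbf{z}}^{\ell_j+1} \to \bar{\mathbf{z}}^{\star\star}$ (again by compactness), continuity yields $J(\bar{\mathbf{z}}^{\star\star}, \delta^\infty) = J^\infty = J(\bar{\mathbf{z}}^\infty, \delta^\infty)$. Because the $\bar{\mathbf{z}}$-block of $J$ is strictly convex in $\bar{\mathbf{z}}$ and $\bar{\mathbb{D}}$ is convex (cf.\ the uniqueness argument already used in the proof of Proposition~\ref{prop:bidir_mono_decreasing}), the minimiser of $J(\cdot, \delta^\infty)$ over $\bar{\mathbb{D}}$ is unique, so $\bar{\mathbf{z}}^{\star\star} = \bar{\mathbf{z}}^\infty$ and $\bar{\mathbf{z}}^\infty$ is that unique minimiser. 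The analogous argument, using that $\delta^{\ell_j+1}$ minimises $J(\bar{\mathbf{z}}^{\ell_j+1}, \cdot)$ over the convex set $\mathbb{D}^\delta$, yields the optimality of $\delta^\infty$ for $J(\bar{\mathbf{z}}^\infty, \cdot)$.

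Finally I would invoke convexity of $J$ on the convex set $\bar{\mathbb{D}} \times \mathbb{D}^\delta$. The two partial optimality conditions give, for every feasible $(\bar{\mathbf{z}}, \delta)$,
\[
	\nabla_{\bar{\mathbf{z}}} J(\bar{\mathbf{z}}^\infty, \delta^\infty)^\top (\bar{\mathbf{z}} - \bar{\mathbf{z}}^\infty) \;\geq\; 0, \qquad \nabla_{\delta} J(\bar{\mathbf{z}}^\infty, \delta^\infty)^\top (\delta - \delta^\infty) \;\geq\; 0,
\]
whose sum is precisely the global first-order optimality condition for~\eqref{eq:OP_full}; convexity then upgrades this to global optimality, giving $J^\infty = J^\star$. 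The main obstacle I foresee is the block-argmin continuity step, i.e.\ ruling out that a limit iterate satisfies only the descent inequality but not the argmin condition; this is where strict convexity in the $\bar{\mathbf{z}}$-block (granted by the structure $\|\cdot - \zeta^\ell_\kappa\|_2^2$) is essential and must be exploited carefully, and why the non-convex formulation of~\cite{BaumGrun19} would not be amenable to this argument.
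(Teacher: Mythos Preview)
Your argument is correct and is the classical block-coordinate-descent convergence proof: extract an accumulation point, show it is blockwise optimal by passing to the limit in the per-block argmin conditions, and then add the two resulting variational inequalities to obtain the joint first-order condition, which convexity upgrades to global optimality. The paper proceeds differently: instead of verifying blockwise optimality at the limit, it shows that at any \emph{non-optimal} iterate $(\bar{\mathbf{z}}^\ell,\delta^\ell)$ one full algorithm step produces a \emph{strict} decrease. This is done via the directional derivative of $J$ toward an optimum $(\bar{\mathbf{z}}^\star,\delta^\star)$: since $\delta^\ell$ already minimises $J(\bar{\mathbf{z}}^\ell,\cdot)$, the $\delta$-part of that derivative is nonnegative, forcing the $\bar{\mathbf{z}}$-part to be strictly negative, hence re-optimising the batteries strictly improves $J$. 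A continuity argument then yields a contradiction at any accumulation point with $\hat J>J^\star$. Your route is more modular and immediately delivers the first-order optimality system at the limit (useful if one also wants KKT multipliers), while the paper's route makes the strict-descent mechanism of the algorithm explicit. One small simplification of your argument: for the $\delta$-block you can use $\delta^{\ell_j}$ directly (it minimises $J(\bar{\mathbf{z}}^{\ell_j},\cdot)$), so no further subsequence is needed there; and for the $\bar{\mathbf{z}}$-block, once you know $J(\bar{\mathbf{z}}^\infty,\delta^\infty)=J(\bar{\mathbf{z}}^{\star\star},\delta^\infty)=\min_{\bar{\mathbf{z}}}J(\bar{\mathbf{z}},\delta^\infty)$, the conclusion that $\bar{\mathbf{z}}^\infty$ is a minimiser follows without invoking uniqueness, so strict convexity in $\bar{\mathbf{z}}$ is not actually essential to your version of the proof.
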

\begin{proof}
	Since by Proposition~\ref{prop:bidir_mono_decreasing} the sequence $(J^\ell)_{\ell \in \mathbb{N}_0}$ generated by Algorithm~\ref{alg:bidir_opt_scheme} is bounded and monotonous, it converges to its infimum.\ %
	Let $(\bar{\mathbf{z}}^\star, \delta^\star)$ denote a (not necessarily unique) optimal solution of~\eqref{eq:OP_full}.\ %
	Then, there exists some $u^\star = (u^{(1),\star}, \ldots, u^{(M), \star}) \in \mathbb{U}$ such that
	\begin{align}
		\bar{z}_\kappa^\star \; = \; \bar{w}_\kappa + A^{(\kappa)} u^{(\kappa), \star} \notag
	\end{align}
	holds for all $\kappa \in [1:M]$.\ %
	Furthermore, assume an iterate $(\bar{\mathbf{z}}^\ell, \delta^\ell)$ of Algorithm~\ref{alg:bidir_opt_scheme} to be given, which is not optimal, i.e., there exists some $u^\ell \in \mathbb{U}$ such that
	\begin{align}
		\bar{z}_\kappa^\ell \; = \; \bar{w}_\kappa + A^{(\kappa)} u^{(\kappa), \ell} \notag
	\end{align}
	for all $\kappa \in [1:M]$ and
	\begin{align}
		J^\star \; = \; J(\bar{\mathbf{z}}^\star, \delta^\star) \; < \; J(\bar{\mathbf{z}}^\ell, \delta^\ell) \; = \; J^\ell. \label{eq:bidir_convergence_proof_Jell_not_optimal}
	\end{align}
	Then, the reference~$\zeta^\ell$ is updated in Steps~4 and~5 of Algorithm~\ref{alg:bidir_opt_scheme} based on the power exchange~$\delta^\ell$.\ %
	Next, Algorithm~\ref{alg:bidir_opt_scheme} computes an update~$(\bar{\mathbf{z}}^{\ell+1}, \delta^{\ell+1})$ such that
	\begin{align}
		J^{\ell+1} \; = \; J(\bar{\mathbf{z}}^{\ell+1}, \delta^{\ell+1}) \; \leq \; J(\bar{\mathbf{z}}^\ell, \delta^\ell) \; = \; J^\ell. 
		\label{eq:bidir_convergence_proof_descent_condition}
	\end{align}
	
	Since, $J$ is convex (on a convex domain) and differentiable, its derivative at $(\bar{\mathbf{z}}^\ell, \delta^\ell)$ in direction of the optimum $(\bar{\mathbf{z}}^\star, \delta^\star)$ is negative, i.e.,
	\begin{align}
		0 \; & > \; \left( \nabla J(\bar{\mathbf{z}}^\ell, \delta^\ell) \right)^\top \left( \begin{pmatrix} \bar{\mathbf{z}}^\star \\ \delta^\star \end{pmatrix} - \begin{pmatrix} \bar{\mathbf{z}}^\ell \\ \delta^\ell \end{pmatrix} \right) \; \notag \\
		& = \; \left( \nabla_{\bar{\mathbf{z}}} J(\bar{\mathbf{z}}^\ell, \delta^\ell) \right)^\top (\bar{\mathbf{z}}^\star - \bar{\mathbf{z}}^\ell) + \left( \nabla_{\delta} J(\bar{\mathbf{z}}^\ell, \delta^\ell) \right)^\top (\delta^\star - \delta^\ell). \notag
	\end{align}
	If $\left( \nabla_{\bar{\mathbf{z}}} J(\bar{\mathbf{z}}^\ell, \delta^\ell) \right)^\top (\bar{\mathbf{z}}^\star - \bar{\mathbf{z}}^\ell) < 0$, then strict inequality holds in~\eqref{eq:bidir_convergence_proof_descent_condition} since optimising the batteries yields a strict improvement.\ %
	Now, assume $\left( \nabla_{\bar{\mathbf{z}}} J(\bar{\mathbf{z}}^\ell, \delta^\ell) \right)^\top (\bar{\mathbf{z}}^\star - \bar{\mathbf{z}}^\ell) = 0$.\ %
	Then, $\bar{\mathbf{z}}^\ell = \bar{\mathbf{z}}^\star$ due to strict convexity of~$J$ with respect to~$\bar{\mathbf{z}}$.\ %
	Therefore, 
	\begin{align}
		\delta^\ell \in \argmin_{\delta \in \mathbb{D}^\delta} J(\bar{\mathbf{z}}^\ell, \delta) = \argmin_{\delta \in \mathbb{D}^\delta} J(\bar{\mathbf{z}}^\star, \delta), \notag 
	\end{align}
	which means that $(\bar{\mathbf{z}}^\ell, \delta^\ell)$ is optimal, in contradiction to~\eqref{eq:bidir_convergence_proof_Jell_not_optimal}.\ %
	Consequently, strict inequality holds in~\eqref{eq:bidir_convergence_proof_descent_condition}.\ %
	
	Since the objective function~$J$ is continuous and the feasible set $\bar{\mathbb{D}} \times \mathbb{D}^\delta$ is compact, there exists an accumulation point $(\hat{\mathbf{z}}, \hat{\delta}) \in \bar{\mathbb{D}} \times \mathbb{D}^\delta$ of the sequence $(\bar{\mathbf{z}}^\ell, \delta^\ell)_{\ell \in \mathbb{N}_0}$ constructed by Algorithm~\ref{alg:bidir_opt_scheme} such that
	\begin{align}
		\hat{J} \; := \; J(\hat{\mathbf{z}}, \hat{\delta}) \; = \; J^\infty. \notag
	\end{align}
	Clearly, $\hat{J} \geq J^\star$.\ %
	We assume $\hat{J} > J^\star$ and derive a contradiction.\ %
	The strict inequality in~\eqref{eq:bidir_convergence_proof_descent_condition} at the accumulation point $(\hat{\mathbf{z}}, \hat{\delta})$ in combination with the continuity of~$J$ implies that
	\begin{align}
		J(\bar{\mathbf{z}}^{\ell+1}, \delta^{\ell+1}) \; < \; \hat{J}
	\end{align}
	for all $(\bar{\mathbf{z}}^\ell, \delta^\ell) \in \mathcal{B}_\varepsilon(\hat{\mathbf{z}}, \hat{\delta})$ for sufficiently small $\varepsilon > 0$.\ %
	However, according to Proposition~\ref{prop:bidir_mono_decreasing} the sequence $(J^\ell)_{\ell \in \mathbb{N}_0}$ is monotonically non-increasing which contradicts the definition of the accumulation point $(\hat{\mathbf{z}}, \hat{\delta})$.\ %
	Therefore, $J^\infty = J^\star$, which completes the proof.\ %
\end{proof}

\section{Numerical Case Study}\label{sec:numerics}
In section, we illustrate the potential of the proposed approach in a numerical case study based on real-world data.\ %

\subsection{Implementation Details}
We use an alternating direction method of multipliers (ADMM)~\cite{BoydPari11} to solve~\eqref{eq:OP_lower_update} in a distributed manner within each MG in parallel.\ %
Furthermore, the optimisation with respect to the power exchange can be decoupled in time.\ %
Thus, the objective function for the power exchange during one time step~$n$ reduces to 
\begin{align}
	& J_n(\bar{\mathbf{z}}(n), \delta(n)) \notag \\
	= &  \sum_{\kappa=1}^M \left( y_\kappa(n) - a_\kappa^\top \delta(n) \right)^2 \notag \\
	= &  \delta(n)^\top \sum_{\kappa=1}^M (a_\kappa a_\kappa^\top) \delta(n) -2 \sum_{\kappa=1}^M (y_\kappa(n) a_\kappa^\top) \delta(n), \notag 
\end{align}
where
\begin{align}
	a_1 \; & = \; (\lambda_{12}, -\eta_{21} \lambda_{21}, \lambda_{13}, -\eta_{31} \lambda_{31}, \lambda_{14}, -\eta_{41} \lambda_{41}, 0, 0)^\top \notag \\
	a_2 \; & = \; ( -\eta_{12} \lambda_{12}, \lambda_{21}, 0, 0, 0, 0, \lambda_{24}, -\eta_{42} \lambda_{42} )^\top \notag \\
	a_3 \; & = \; ( 0, 0, -\eta_{13} \lambda_{13}, \lambda_{31}, 0, 0, 0, 0 )^\top \notag \\ 
	a_4 \; & = \; ( 0, 0, 0, 0, -\eta_{14} \lambda_{14}, \lambda_{41}, -\eta_{24} \lambda_{24}, \lambda_{42} )^\top \notag 
\end{align} 
satisfying $a_\kappa \delta(n) = \Delta_\kappa(n)$ encode the grid topology visualised in Figure~\ref{fig:exchange_structure} and with the notions $y_\kappa(n) = \zeta_\kappa(n) - \bar z_\kappa(n)$ and 
\begin{align}
	\delta(n) \; := \; ( \delta_{12}, \delta_{21}, \delta_{13}, \delta_{31}, \delta_{14}, \delta_{41}, \delta_{24}, \delta_{42} )^\top (n) \in \mathbb{R}^8. \notag 
\end{align}
In other words, we replace the large-scale optimisation problem $\min_{\delta \in \mathbb{D}^\delta} J(\bar{\mathbf{z}}, \delta)$ by $N$ parallelisable QPs.\ %
Thus, the main computational effort lies with the local optimisation of the MGs which can be done efficiently using distributed optimisation techniques as discussed, e.g.\ in~\cite{BrauFaul18,JianSaue21}.\ %
Alternatively, one might use surrogate models to approximate the solution of the lower level optimisation and, thus, reduce computation and communication effort as suggested in the underlying work~\cite{BaumGrun19}.\ %
The QPs are solved using the \texttt{MATLAB}-inherent toolbox \texttt{quadprog}.\ %

\subsection{Description of Data used in Simulations}
The load profiles~$\ell_i$ as visualised in Figure~\ref{fig:load_profiles} are taken from the ISSDA data set\footnote{\url{https://www.ucd.ie/issda/data/commissionforenergyregulationcer/}}.\ %
\begin{figure}[h]
	\includegraphics[width=\columnwidth]{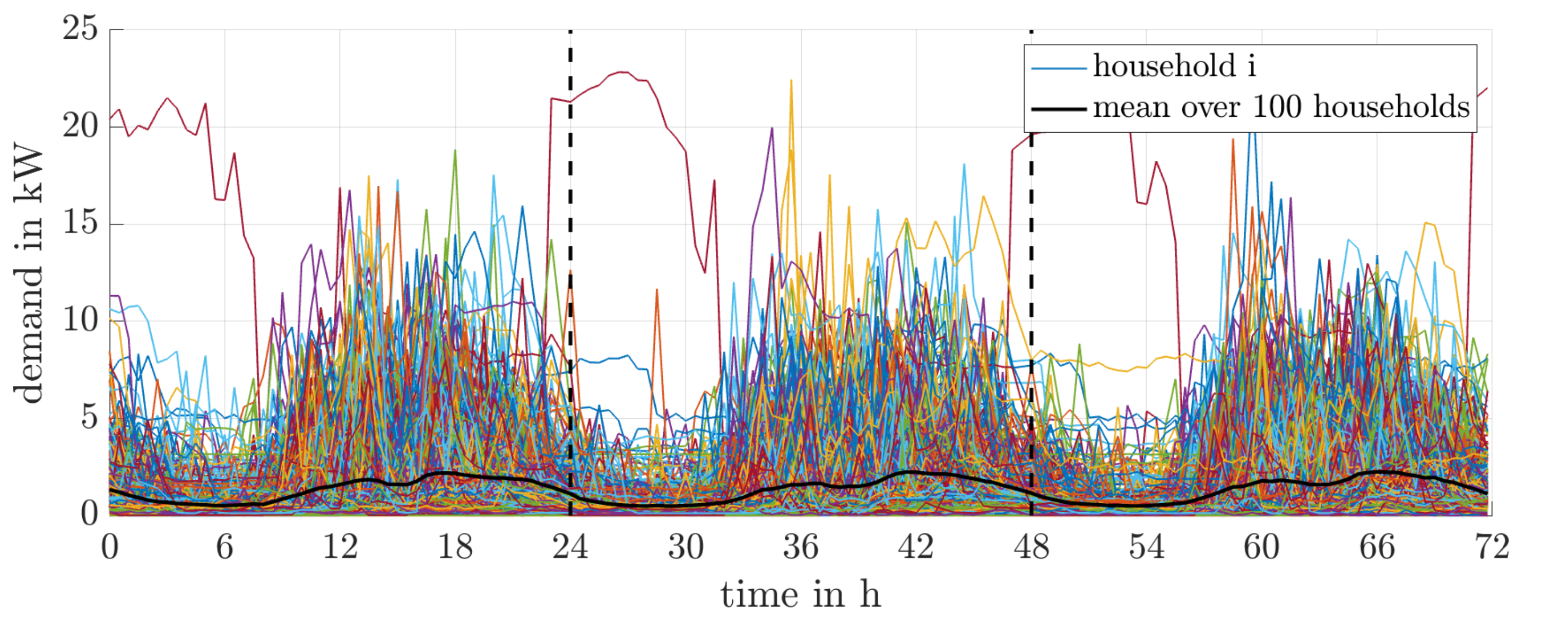}
	\caption{Load profiles of 100 residential homes and their average over three days.\ %
		The dashed vertical lines indicate the time window considered in the simulations.}
	\label{fig:load_profiles}
\end{figure}
In our simulations we used in total 100 households for the four MGs depicted in Figure~\ref{fig:exchange_structure} and consider a time span of one day as highlighted by the dashed black lines in Figure~\ref{fig:load_profiles} (plus prediction of the consecutive day).\ %
The data is given in 15min intervals, thus, $T = 0.25$.\ %

For the residential generation~$g_i$ we used the hourly radiation data from the EU Science Hub\footnote{\url{https://ec.europa.eu/jrc/en/pvgis}}.\ %
Here, only one profile is provided.\ %
In order to use it in our simulations, we simply added the resulting generated power every single household.\ 
In addition to the \emph{actual} generation, a (virtual) \emph{predicted} generation profile for each household was created.\footnote{\url{https://github.com/klaus-rheinberger/DSM-data}}\ %
Examples are visualized in Figure~\ref{fig:pv_profiles}.\ 
\begin{figure}[h]
	\centering
	\includegraphics[width=\columnwidth]{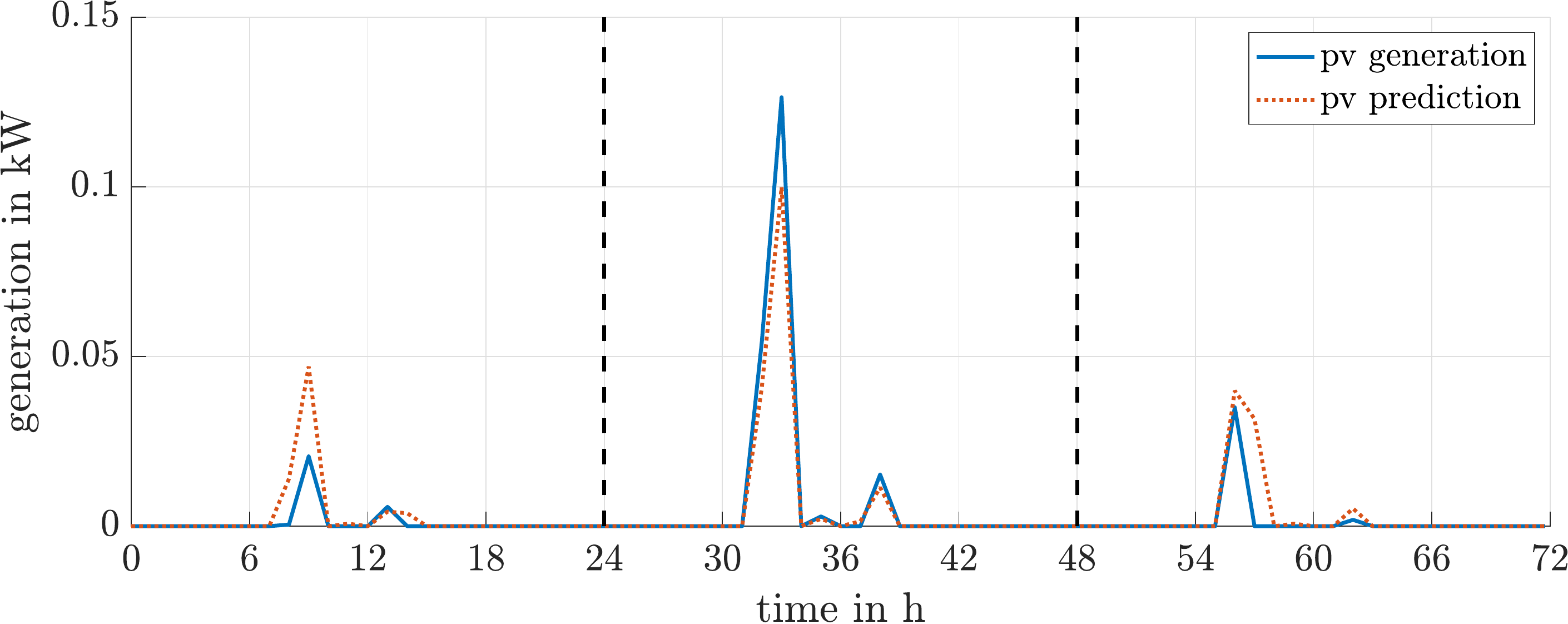}
	\caption{Predicted versus actual photovoltaic (PV) generation profile over three days.\ %
		The dashed vertical lines indicate the time window considered in the simulations.}
	\label{fig:pv_profiles}
\end{figure}
Note, however, that the scale of the generation is insignificantly smaller than the scale of the load.\ %

We further equipped each household virtually with a battery with randomly chosen parameters.\ %
The corresponding random distributions are 
\begin{align}
	C_{\kappa,i} & \sim \mathcal{N}(2,0.5) \text{ [kWh]} \quad x_{\kappa, i} \sim \mathcal{N}(0.5, 0.05) \text{ [kWh]} \notag \\
	\alpha_{\kappa,i} & \sim \mathcal{N}(0.99, 0.01) \quad \beta_{\kappa, i}, \gamma_{\kappa, i} \sim \mathcal{N}(0.95, 0.05) \notag \\
	- \underline{u}_{\kappa, i}, \bar{u}_{\kappa, i} & \sim \mathcal{N}(0.25, 0.15) \cdot C_{\kappa,i} \text{ [kW]}, \notag 
\end{align} 
where $\mathcal{N}(\mu, \sigma)$ denotes the normal distribution with expected value~$\mu$ and standard deviation~$\sigma$.\ %
The prediction horizon length was chosen as $N = 96$ (one day) while the line efficiencies and the line limits are given by
\begin{align}
	\eta \; = \; 
	\begin{bmatrix}
		0    & 0.8   &   0.9 &  0.85 \\
		0.8   &    0   &   0    &  0.9 \\
		0.9   &   0    &   0    &    0 \\
		0.85 &   0.9  &  0    &    0
	\end{bmatrix}
	\; \text{and} \; 
	\lambda \; = \; 
	\begin{bmatrix}
		0   &  9  &   8  &   7 \\
		9   &  0  &   0  &   8 \\
		8   &  0  &   0  &   0 \\
		7   &  8  &   0  &   0 
	\end{bmatrix}. 
	\notag 
\end{align}

\begin{remark}
	Instead of introducing different reference trajectories~$\zeta_\kappa$ for each MG~$\kappa$, one could use a uniform $\zeta$ for all MGs.\ %
	In our simulations, the goal of the power exchange is to further reduce the overall peak shaving.\ %
	To this end, we use the share 
	\begin{align}
		\zeta_\kappa(n) \; = \; \frac{\mathcal{I}_\kappa}{\sum_{\nu=1}^M \mathcal{I}_\nu} \; \zeta(n) \quad \forall \, n \in \{k, \ldots, k+N-1\}, \notag 
	\end{align}
	of the overall average net consumption 
	\begin{align}
		\zeta(n) \; = \; \frac 1 N \sum_{j=n-N+1}^n \sum_{\kappa=1}^M \sum_{i=1}^{\mathcal{I}_\kappa} w_{\kappa, i}^\mathrm{pred}(j) \notag 
	\end{align}
	over the last~$N$ time steps (for $n \geq N$).\ %
\end{remark}

\subsection{Open-Loop Results}
The improvement of the overall performance based on Algorithm~\ref{alg:bidir_opt_scheme} (for one time step in open loop) is depicted in Figures~\ref{fig:convergence}.\ %
\begin{figure}[h]
	\centering
	\includegraphics[width=\columnwidth]{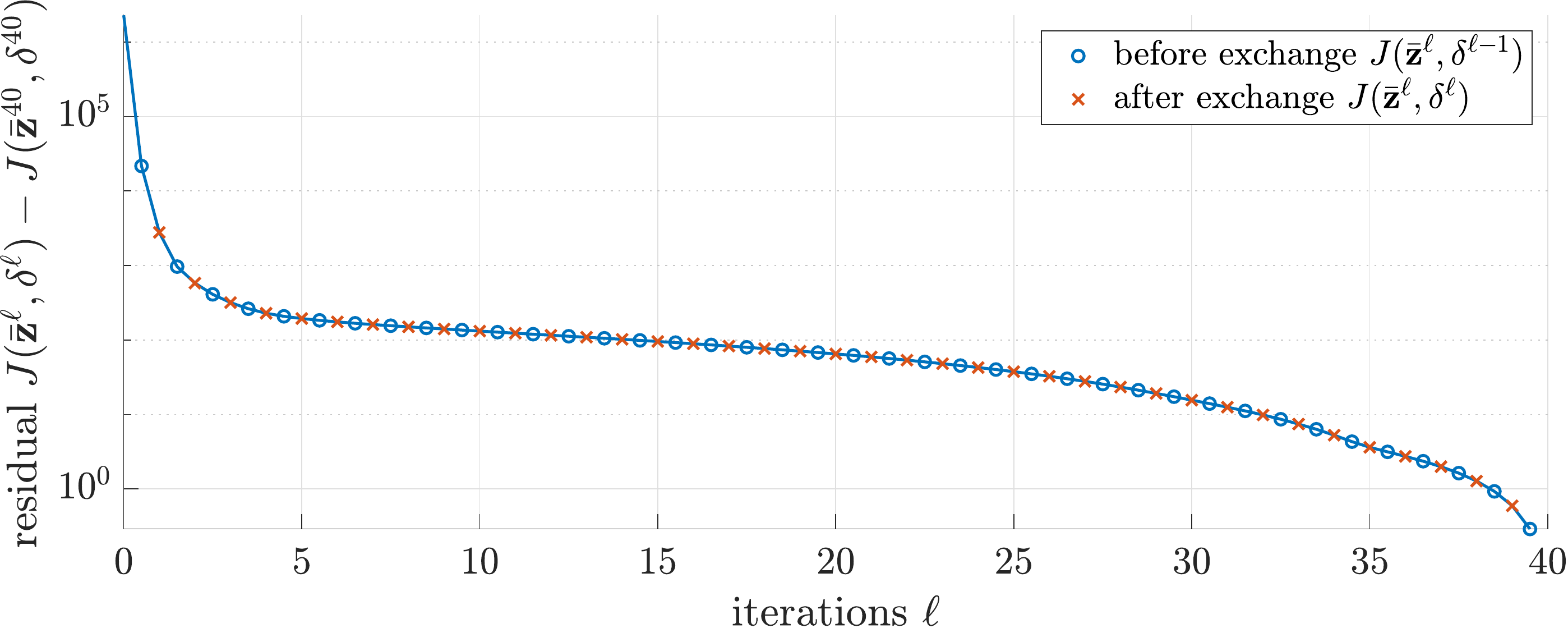}
	\caption{Convergence of Algorithm~\ref{alg:bidir_opt_scheme} for the first 40 iterations.}
	\label{fig:convergence}
\end{figure}%
Here, we considered four MGs with topology as depicted in Figure~\ref{fig:exchange_structure}.\ %
The MGs consist of $\mathcal{I}_\kappa = 40, \, 20, \, 20, \, 20$ residential units, respectively.\ %
The corresponding evolution of the objective function values is listed in Table~\ref{tab:obj_values}.\ %
\begin{table}[h]
	\centering
	\caption{Evolution of the objective function value $J(\bar{\mathbf{z}}^\ell, \delta^{\ell-1})$ (before exchange) and $J(\bar{\mathbf{z}}^\ell, \delta^\ell)$ (after exchange) generated by Algorithm~\ref{alg:bidir_opt_scheme}}
	\begin{tabular}{c|rr}
		iteration & before exchange & after exchange \\
		\hline 
		0 & 2,313,011 & -- \\
		1 & 51,602 & 32,727 \\
		2 & 30,915 & 30,529 \\ 
		3 & 30,358 & 30,266 \\
		\vdots & \vdots & \vdots \\
		39 & 29,949 & 29,949 \\
		40 & 29,949 & 29,948
	\end{tabular}
	\label{tab:obj_values}
\end{table}
Note that main reduction of the objective function value is achieved within the first step, i.e.\ by controlling the local batteries.\ %
However, additional power exchange reduces the costs further by more than 40\%.\ %
Moreover, the overall objective function value could be slightly decreased further by running Algorithm~\ref{alg:bidir_opt_scheme} for more iterations.\ %
However, we stopped after 40 iterations since the relative improvement 
\begin{align}
	\frac{J(\bar{\mathbf{z}}^{40}, \delta^{39}) - J(\bar{\mathbf{z}}^{40}, \delta^{40})}{J(\bar{\mathbf{z}}^{40}, \delta^{39})} \; & \approx \; \frac{29948.60 - 29948.31}{29948.60} \notag \\
	& \approx \; 9.6833 \cdot 10^{-6} \notag 
\end{align}
was sufficiently small.\ %
\begin{remark}
	The purpose of the simulations is a proof of concept; this case study does not show the full potential of the power exchange or the presented optimisation approach.\ %
	The data is publicly available and has not been further manipulated by the authors.\ %
	In particular, the average load and generation profiles within each MG are qualitatively similar.\ %
	In practice, it is often the case that, e.g.\ due to generation via renewables, there is a power surplus in one and a demand in another region.\ %
	In such scenarios, power exchange might yield even better performances.\ %
\end{remark}

\subsection{Closed-Loop Results}
In practice, one would not implement the prediction-based solution at once but rather update it iteratively once new data --~in our case weather forecast and, thus, generation prediction~-- comes available.\ %
This procedure is typically referred to as model predictive control (MPC) and is summarised in Algorithm~\ref{alg:mpc}.\ %
\begin{algorithm}[h]
	\caption{Model predictive control for solving~\eqref{eq:OP_full}}
	{\bf Input}: prediction horizon length $N \in \mathbb{N}_{\geq 2}$,  
	desired reference trajectories $\zeta_\kappa = (\zeta_\kappa(k), \ldots, \zeta_\kappa(k+N-1))^\top \in \mathbb{R}^N$, $\kappa \in [1:M]$. \\
	{\bf Initialise}: $k = 0$. \\
	{\bf Repeat}: 
	\begin{enumerate}
		\item Measure current SoC $x_{\kappa_i}(k) \in \mathbb{X}_{\kappa_i}$ and predict net consumption $w^\mathrm{pred} = (w^\mathrm{pred}(k), \ldots, w^\mathrm{pred}(k+N-1))^\top$.
		\item Run Algorithm~\ref{alg:bidir_opt_scheme} to obtain optimal control $u_k^{\kappa,\star} = (u_k^{\kappa, \star}(k), \ldots, u_k^{\kappa,\star}(k+N-1))^\top \in \mathbb{R}^{2N}$ for each MG~$\kappa$ and $\delta_k^\star = (\delta_k(k)^\top, \ldots, \delta_k(k+N-1)^\top) \in \mathbb{R}^{2|E|N}$.
		\item Implement the first control impulse $\mu_k := (u_k^{\kappa,\star}(k)), \delta_k^\star(k))$. 
		\item Shift time window and increment $k \leftarrow k+1$.
	\end{enumerate}
	{\bf Output}: control sequence $(\mu_k)_{k \in \mathbb{N}_0}$.
	\label{alg:mpc}
\end{algorithm}
For an introduction to MPC we refer to~\cite{RawlMayn17}.\ %

A comparison of the open-loop performance and the MPC closed loop is given in Figures~\ref{fig:overall_OL_CL} and~\ref{fig:single_MGs_OL_CL}.\ %
\begin{figure}[h]
	\centering
	\includegraphics[width=\columnwidth]{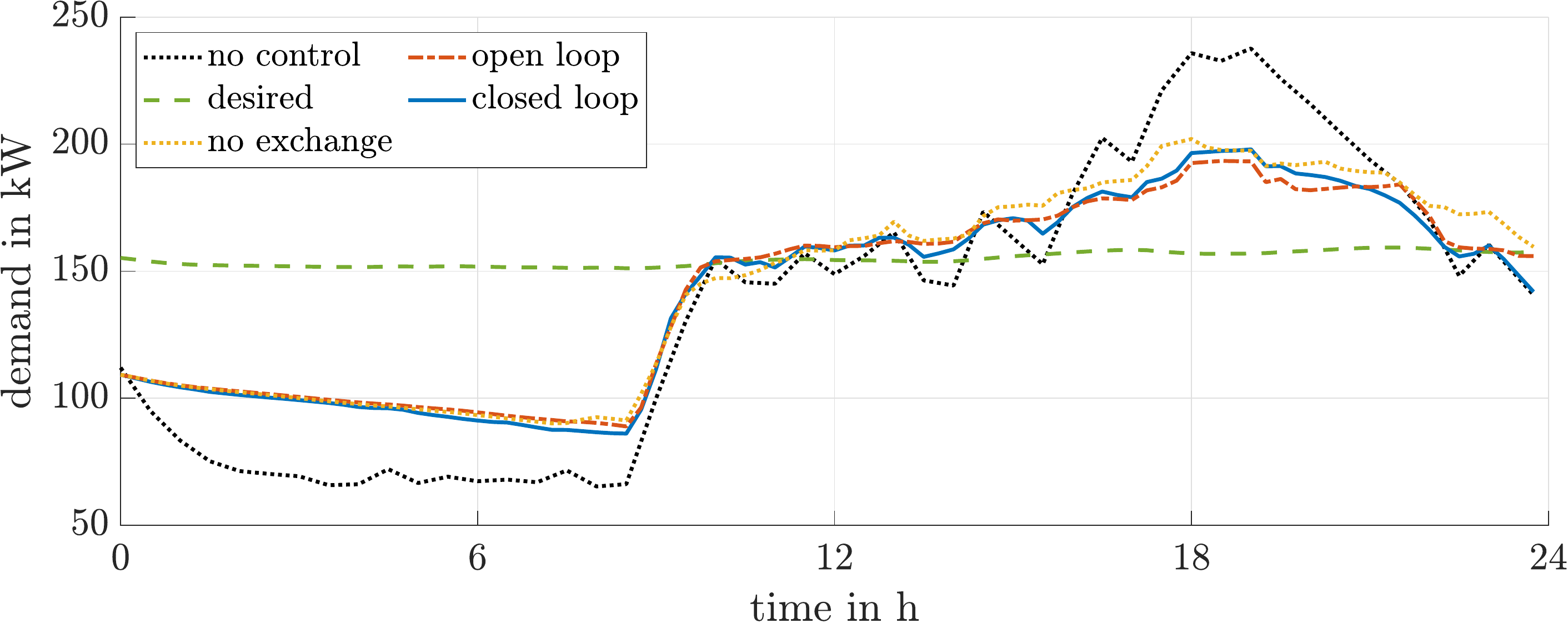}
	\caption{Open versus closed-loop performance.}
	\label{fig:overall_OL_CL}
\end{figure}%
\begin{figure}[h]
	\centering
	\includegraphics[width=\columnwidth]{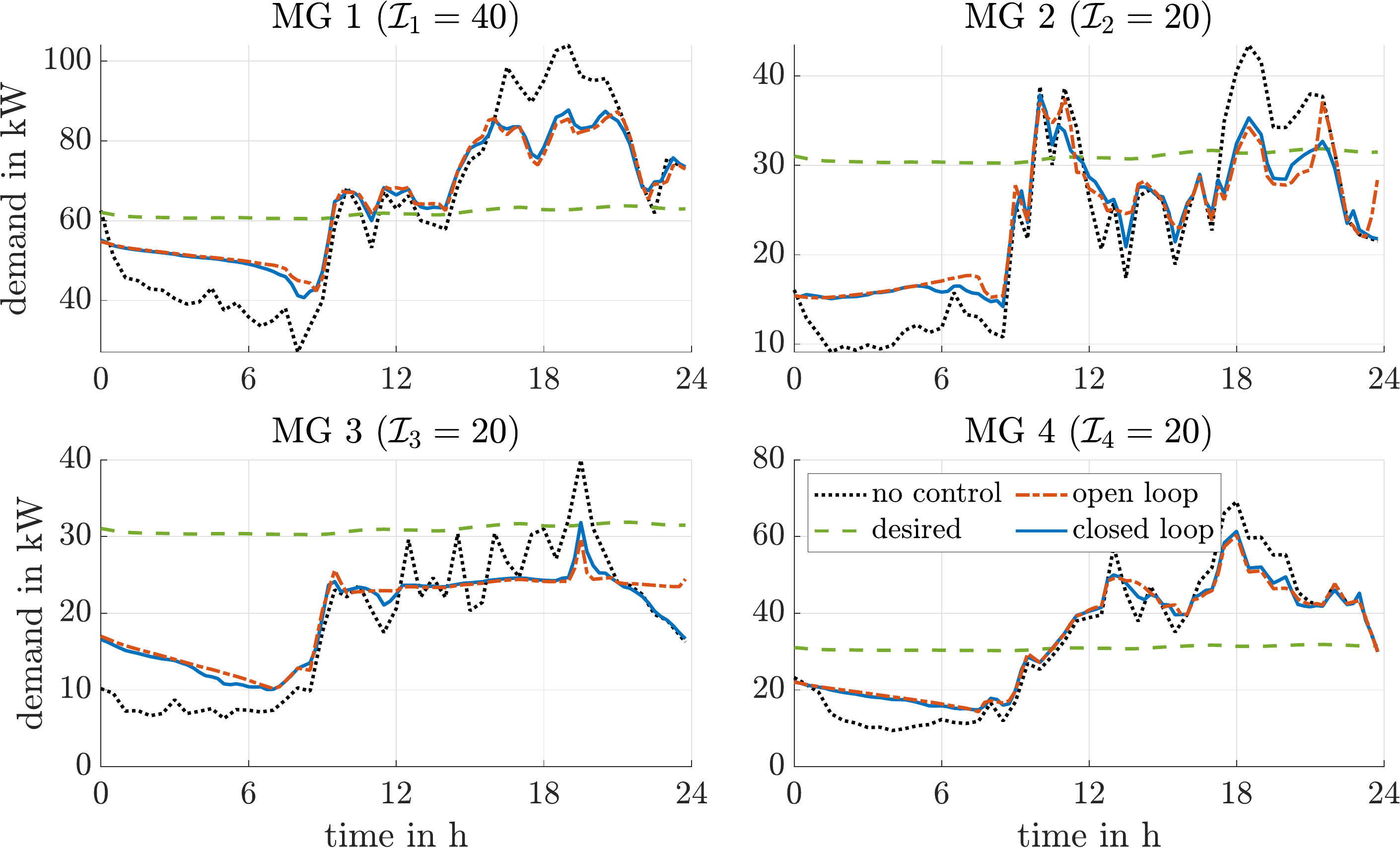}
	\caption{Open versus closed-loop performance within each MG.}
	\label{fig:single_MGs_OL_CL}
\end{figure}%
In Figure~\ref{fig:overall_OL_CL}, additionally, the power demand after optimising the batteries but before exchanging power is visualised.\ %
As mentioned above, the impact of further exchanging power is rather small but still significant.\ %
The closed-loop solution is close to the one in open loop.\ %
However, in particular, towards the end of the considered time window the performance is slightly worse.\ %
This is a typical phenomenon of MPC since it takes predictions for the consecutive day into account and, hence, adjusts the control accordingly.\ %

\section{Conclusions}\label{sec:conclusions}
In this paper, we proposed a bidirectional optimisation scheme for the hierarchical optimisation of partially interconnected microgrids (MGs).\ %
While in the previously published model~\cite{BaumGrun19} the power exchange among neighbouring MGs was limited by the power demand and, thus, no power could be exchanged if the demand was zero, we circumvent this problem via an improved model formulation.\ %
Thus, more flexibility within the grid is available reducing the overall load shaping costs.\ %
We proved global convergence of the optimisation scheme exploiting the convexity of the novel problem formulation and demonstrated the potential in a numerical case study based on real-world data.\ %

It is straightforward to combine the presented approach with the use of surrogate models as suggested in~\cite{BaumGrun19} in order to further reduce the communication overhead as well as the computational effort.\ %
Another possible extension is to consider more than two layers of the grid hierarchy and apply the presented approach at each interface connecting two layers.\ %

\bibliographystyle{plain}
\bibliography{references.bib}

\end{document}